\newcommand{\mc}{\mathcal}
\newcommand{\ms}{\mathscr}
\newcommand{\mb}{\mathbb}
\newcommand{\mr}{\mathrm}
\newcommand{\aveN}{\frac{1}{N}\sum_{n=1}^N}
\newtheorem{Rem}{Remark}
\newtheorem{Exam}{Example}
\newtheorem{Thm}{Theorem}
\theoremstyle{definition}
\theoremstyle{plain}
\DeclareMathOperator{\Fix}{Fix}
\newcommand{\lin}{\text{lin}}
\newcommand{\la}{\langle}
\newcommand{\ra}{\rangle}
\title{On the pointwise entangled ergodic theorem}
\author{Tanja Eisner}
\address{Mathematisches Institut, Universit\"at Leipzig, P.O. Box 100920, 04009 Leipzig, Germany}
\email{eisner@math.uni-leipzig.de}
\author{D\'avid Kunszenti-Kov\'acs}
\address{MTA Alfréd Rényi Institute of Mathematics, P.O. Box 127, H-1364 Budapest, Hungary}
\email{daku@fa.uni-tuebingen.de}
\keywords{Entangled ergodic averages, pointwise convergence, unimodular eigenvalues, Dunford-Schwartz operators}
\subjclass[2000]{Primary: 47A35; Secondary: 37A30}
\begin{document}

\maketitle

\begin{center}
\emph{Dedicated to  our advisor Rainer Nagel on the occasion of his 75$^{th}$ birthday }
\end{center}

\vspace{0.5cm}

\begin{abstract}
We present some twisted compactness conditions for almost everywhere convergence of one-parameter entangled ergodic averages of Dunford-Schwartz operators  $T_0,\ldots, T_a$ on a Borel probability space of the form $$
\aveN T_a^n A_{a-1}T_{a-1}^nA_{a-1}\cdot \ldots \cdot A_0 T_0^nf$$
for $f\in L^p(X,\mu)$, $p\geq 1$. We also discuss examples and present a continuous version of the result.
\end{abstract}

%
%

\section{Introduction}

For the proof of a central limit theorem for certain models in quantum pro\-ba\-bility, Accardi, Hashimoto, Obata \cite{AHO}  introduced the study of entangled ergodic averages. These were studied further 
by Liebscher \cite{liebscher:1999}, Fidaleo \cite{fidaleo:2007,fidaleo:2009,fidaleo:2010}, and the authors \cite{EKK}.
We refer to \cite{EKK} for more information and the connection to noncommutative multiple ergodic theorems.

The setting of the entangled ergodic theorems is the following.
Let $k\leq m$ be positive integers and $\alpha:\{1,\ldots,m\}\to \{1,\ldots,k\}$ be a surjective map. Let further $E$ be a Banach space, $T_1,\ldots,T_m$ and $A_1,\ldots,A_{m-1}$ be bounded operators on $E$. As shown in \cite{EKK}, the entangled ergodic averages
$$
\frac{1}{N^k}\sum_{n_1,\ldots,n_k=1}^N T_m^{n_{\alpha(m)}} A_{m-1}T_{m-1}^{n_{\alpha(m-1)}} \cdots A_1 T_1^{n_{\alpha(1)}}
$$
converge in norm under quite weak compactness conditions on the operators $T_j$ and the pairs $(A_j,T_j)$.

In our knowledge, pointwise convergence of the entangled ergodic averages for $E:=L^p(X,\mu)$, where $(X,\mu)$ is a probability space and $p\geq 1$, and for Koopman or Dunford-Schwartz operators $T_1,\ldots,T_m$ has not yet been investigated. The aim of this paper is to  close this gap partially and to present sufficient conditions in the spirit of those in  \cite{EKK} for the case $k=1$. The general case remains open. In what follows, we shall denote by $\mb{N}$ the set of positive integers.

Our main result is the following. (Recall that a  Borel probability space is a compact metrizable space with a Borel probability measure, see e.g.~Einsiedler, Ward \cite[Def.~5.13]{EW}. For the Jacobs-deLeeuw-Glicksberg decomposition and basics on Dunford-Schwartz operators see Section \ref{sec:prelim}.)

\begin{Thm}\label{thm:main}
For $a\in\mb{N}$, let $T_0,T_1,\ldots T_a$ be
Dunford-Schwartz operators 
 on a Borel probability space $(X,\mu)$ with $\Fix |T_1|=\ldots=\Fix |T_a|=\la\mathbf{1}\ra$.
For $p\in[1,\infty)$ and $E:=L^p(X,\mu)$, let 
$E=E_{0,r}\oplus E_{0,s}$ be the Jacobs-deLeeuw-Glicksberg decomposition corresponding to $T_0$, and let further $A_j\in\mc{L}(E)$ $(0\leq j< a)$ be bounded operators. For a function $f\in E$ and an index $0\leq j< a$, write $\ms{A}_{j,f}:=\left\{A_jT_j^nf\left|\right.n\in\mb{N}\right\}$. Suppose that the following conditions hold:
\begin{itemize}
\item[(A1)]\emph{(Twisted compactness)}
For every $f\in E$, $0\leq j< a$ and $\varepsilon>0$, there exists a decomposition (depending on $f$, $j$ and $\varepsilon$) $E=\mc{U}\oplus \mc{R}$ with $\dim \mc{U}<\infty$
such that $$P_\mc{R}\ms{A}_{j,f}
\subset B_\varepsilon(0,L^\infty(X,\mu)),$$
where $P_\mc{R}$ denotes the projection onto $\mc{R}$ along $\mc{U}$.
\item[(A2)]\emph{(Joint $L^\infty$-boundedness)}
There exists a constant $C>0$ such that
\[\{A_jT^n_j|n\in\mb{N},1\leq j< a\}\subset B_C(0,\mc{L}(L^\infty(X,\mu)).
\]
\end{itemize}
Then we have the following:
\begin{enumerate}
\item for each $f\in E_{0,s}$, $\frac{1}{N}\sum_{n=1}^N |T_a^nA_{a-1}T^n_{a-1}\ldots A_1T_1^nA_0T_0^n f|\rightarrow 0$ pointwise a.e.;
\item if $p=2$, then for each $f\in E_{0,r}$, $\frac{1}{N}\sum_{n=1}^N T_a^nA_{a-1}T^n_{a-1}\ldots A_1T_1^nA_0T_0^n f$ converges pointwise a.e.~to
\begin{equation}\label{eq:lim}
\sum_{\substack{\lambda_j\in\sigma_j\, (1\leq j\leq a)\\  
\lambda_1\cdot\ldots\cdot\lambda_a=1}}
P^{(a)}_{\lambda_a}A_{a-1}P^{(a-1)}_{\lambda_{a-1}}A_{a-2}\ldots A_1P^{(1)}_{\lambda_1}f,
\end{equation}
where $\sigma_j=P_\sigma(T_j)\cap \mb{T}$ and $P^{(j)}_{\lambda_j}$ is
the projection onto the eigenspace of $T_j$ corresponding to $\lambda_j$, i.e., the mean
ergodic projection of the operator $\overline{\lambda_j}T_j$.
\end{enumerate}
\end{Thm}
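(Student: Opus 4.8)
The plan is to combine the Jacobs--deLeeuw--Glicksberg (JdLG) decomposition with the finite-dimensional approximation supplied by (A1) and (A2): everything should reduce to finitely many weighted one-parameter averages $\frac1N\sum_{n=1}^N c(n)T_a^n v$ for the Dunford--Schwartz operator $T_a$, and these are handled by the Dunford--Schwartz pointwise ergodic theorem (applied to $\beta T_a$ for $\beta\in\mathbb T$) together with an elementary estimate for Cesàro--null weights.

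First I split $f=f_r+f_s$ along $E=E_{0,r}\oplus E_{0,s}$, so the two assertions become independent. The common preliminary is a \emph{staged reduction}. Reading $T_a^nA_{a-1}T_{a-1}^n\cdots A_1T_1^nA_0T_0^nf$ from the inside out, I apply (A1) at level $j=0$ to the fixed function $f$, writing $A_0T_0^nf=\sum_{i=1}^{d_0}c^{(0)}_i(n)u^{(0)}_i+r^{(0)}_n$ with $\|r^{(0)}_n\|_\infty<\varepsilon$ uniformly in $n$ and with uniformly bounded scalars $c^{(0)}_i(n)$ (bounded since $T_0$ contracts $L^p$ and the coordinate functionals on the finite-dimensional $\mathcal U^{(0)}$ are bounded). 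I then apply (A1) at level $j=1$ to each $u^{(0)}_i$, and so on through level $j=a-1$. By (A2) and the $L^\infty$-contractivity of Dunford--Schwartz operators, every remainder produced, once pushed through the remaining operators and through $\frac1N\sum_{n=1}^N$, has $L^\infty$-norm $\le C_a\varepsilon$ uniformly in $N$, hence is harmless for a.e.\ convergence. What survives is a finite sum $\sum_{\mathbf k}c_{\mathbf k}(n)\,T_a^n v_{\mathbf k}$ with $v_{\mathbf k}\in E$ fixed and $c_{\mathbf k}(n)$ a bounded scalar sequence, each a product of coordinates of iterated orbits projected onto finite-dimensional subspaces.

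For assertion (1), where $f\in E_{0,s}$, the weights are Cesàro--null. Indeed $c^{(0)}_i(n)$ is, up to $P_{\mathcal U^{(0)}}$, a coordinate of the finite-rank image $P_{\mathcal U^{(0)}}A_0T_0^nf=\sum_l\langle T_0^nf,\eta_l\rangle\,v_l$, and $\frac1N\sum_{n\le N}|\langle T_0^nf,\eta_l\rangle|\to0$ because $f$ is a flight vector: the relevant spectral measure is atomless, so by a Wiener-type argument (genuinely Wiener's lemma plus Cauchy--Schwarz when $p=2$, and by approximation in general) the Cesàro means of $|\langle T_0^nf,\eta_l\rangle|^2$, hence of $|\langle T_0^nf,\eta_l\rangle|$, tend to $0$. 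As the remaining factors of $c_{\mathbf k}(n)$ are bounded, $\frac1N\sum_{n\le N}|c_{\mathbf k}(n)|\to0$. Finally, for $h\in L^1_+$ and a nonnegative bounded sequence $(w_n)$ with $\frac1N\sum_{n\le N}w_n\to0$ one has $\frac1N\sum_{n\le N}w_n|T_a|^n h\to0$ a.e.: split according to whether $w_n\le\delta$, and for the complementary density-zero index set dominate using $|T_a|^n(h-M)_+$ and let $M\to\infty$, invoking the positive Dunford--Schwartz pointwise ergodic and maximal theorems. Taking $w_n=|c_{\mathbf k}(n)|$, $h=|v_{\mathbf k}|$, using $|T_a^n(\cdot)|\le|T_a|^n|\cdot|$, summing the finitely many terms and letting $\varepsilon\to0$ gives (1).

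For assertion (2), with $p=2$ and $f\in E_{0,r}$, by linearity, density and a maximal inequality for $f\mapsto\sup_N\bigl|\frac1N\sum_{n\le N}T_a^nA_{a-1}\cdots A_0T_0^nf\bigr|$ (extracted from the reduction above and the Dunford--Schwartz maximal theorem) it suffices to treat $T_0f=\lambda_0f$, $|\lambda_0|=1$. Then $A_0T_0^nf=\lambda_0^nA_0f$; decomposing $A_0f$ and each later vector along the JdLG decomposition of the relevant $T_j$ (whose reversible part is spanned by \emph{bounded} unimodular eigenfunctions, since $\Fix|T_j|=\langle\mathbf 1\rangle$ forces $|h|\in\Fix|T_j|$ whenever $T_jh=\mu h$, $|\mu|=1$), the flight pieces give weighted averages of a stable orbit with Cesàro--null modulus, which vanish a.e.\ exactly as in (1) (equivalently, by the Dunford--Schwartz pointwise ergodic theorem for $\mu T_j$), whereas a $\lambda_j$-eigenvector of $T_j$ contributes the extra factor $\lambda_j^n$. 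After the whole chain one is left, for each resonance pattern, with $\frac1N\sum_{n\le N}(\lambda_0\lambda_1\cdots\lambda_{a-1})^nT_a^n v\to P^{(a)}_{\lambda_a}v$ where $\lambda_a:=\overline{\lambda_0\cdots\lambda_{a-1}}$ (so $\lambda_0\cdots\lambda_a=1$), nonzero only when $\lambda_a\in\sigma_a$; reassembling over the $T_0$-eigenspaces produces the limit \eqref{eq:lim}. The main obstacles are the maximal inequality just invoked --- which is what makes the finite-dimensional reduction compatible with a.e.\ rather than merely norm convergence --- and the Wiener-type control of the weight sequences; both rest on the interplay of the $L^\infty$-estimates in (A1)--(A2) with the pointwise ergodic theory of Dunford--Schwartz operators, after which the entangled/nested structure is just the bookkeeping of iterating these ingredients through the $a$ levels.
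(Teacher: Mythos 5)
Your part (1) is essentially the paper's argument: iterate (A1) through the levels, extract bounded coordinate functionals by Hahn--Banach, control every remainder by (A2), and use that $f\in E_{0,s}$ makes the level-zero weights Cesàro-null. Your endgame differs slightly but validly: instead of approximating the basis vectors by $L^\infty$ functions small in $L^1$ and using $\Fix|T_a|=\la\mathbf{1}\ra$ to make the Birkhoff limit a small constant, you prove that bounded Cesàro-null weights satisfy $\frac1N\sum_{n=1}^N w_n|T_a|^nh\to0$ a.e.\ for every $h\in L^1_+$ (truncation at level $M$ plus the density-zero set where $w_n>\delta$); that lemma is correct. One cosmetic flaw: the Cesàro-nullity of $\varphi(T_0^nf)$ needs no ``atomless spectral measure/Wiener'' argument (which is dubious for a general Dunford--Schwartz operator on $L^p$); it is the defining property of the stable part of the Jacobs--deLeeuw--Glicksberg decomposition.

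The genuine gap is in part (2). You reduce to eigenvectors $T_0f=\lambda_0 f$ ``by linearity, density and a maximal inequality for $f\mapsto\sup_N\bigl|\frac1N\sum_{n\le N}T_a^nA_{a-1}\cdots A_0T_0^nf\bigr|$ extracted from the reduction above and the Dunford--Schwartz maximal theorem.'' No such inequality is available: the Dunford--Schwartz maximal theorem controls $\sup_N\frac1N\sum_{n\le N}|T^ng|$ for a \emph{fixed} $g$, whereas here the function fed to $T_a^n$ is $A_{a-1}T_{a-1}^n\cdots A_0T_0^nf$ and changes with $n$, and the finite-dimensional data produced by (A1) depend on $f$ and $\varepsilon$ without any uniformity, so they cannot be assembled into a maximal bound valid on a dense class. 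This is exactly the obstruction the paper records in the Remark at the end of Section~4: a.e.\ convergence for a single unimodular eigenvector of $T_0$ is easy for every $p$, but the absence of a Banach principle blocks the passage to general $f\in E_{0,r}$ by density --- which is why part (2) is restricted to $p=2$ and proved by a different mechanism. The paper never decomposes into individual eigenvectors for the a.e.\ statement: it keeps the finite-dimensional pieces from (A1) and shows that the scalar weights $\lambda_{j,n}=\varphi_j(A_0T_0^nf)$ (and their analogues at later levels, after splitting each $g_j$ into its $T_1$-stable part, killed by part (1), and its reversible part) lie in $\ms{P}$, because the $L^2$ orthonormal eigenvector expansion of $f$ turns them into $\ell^1$-combinations of unimodular character sequences via Cauchy--Schwarz/Bessel --- a purely scalar computation --- and then invokes the \c{C}\"omez--Lin--Olsen theorem that elements of $\ms{P}$ are good weights for the pointwise ergodic theorem for Dunford--Schwartz operators, with the (A1)-remainders contributing only an $\varepsilon$-spread to $\limsup-\liminf$. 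Your subsequent step ``decomposing $A_0f$ and each later vector along the JdLG decomposition of the relevant $T_j$, a $\lambda_j$-eigenvector contributing $\lambda_j^n$'' repeats the same flaw one level up: the reversible part is an infinite orthogonal series of eigenvectors, and summing a.e.\ statements over that series again needs precisely the maximal input you do not have. To repair part (2) you must replace the density reduction by the almost-periodic good-weight argument (or else prove the missing maximal inequality, which is not known and is the reason the case $p\neq2$ of part (2) is left open).
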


Note that the above conditions are stronger than the conditions in \cite{EKK} for norm convergence. (In particular, the total mean ergodicity assumption on $T_a$ follows from the discussion in Section \ref{sec:prelim}).
Since the pointwise limit coincides with the norm limit, the above representation of the limit in Theorem \ref{thm:main} is the same as in \cite[Theorem 3]{EKK}.

Note further that a sufficient condition for (A2) is that every $A_j$ is bounded as an operator on $L^\infty(X,\mu)$.

An interesting question not studied in this paper is to find  analogues of the above result for non-commutative multiple ergodic averages. While norm convergence results can just be translated into  corresponding results for convergence of non-commutative multiple ergodic averages in the strong sense, see, e.g., \cite[Section 4]{EKK},  the situation with pointwise convergence is more delicate.  Several different analogues of pointwise convergence in the non-commutative case are provided by Egorov's theorem (see e.g. Junge, Xu \cite{JungeXu}, Lance \cite{Lance}, Yeadon \cite{Yeadon} for non-commutative Birkhoff's  theorem), but the use of the uniform topology combined with projections makes a direct connection to our setting difficult.

The paper is organized as follows. After showing the main ideas in a simpler case in Section \ref{sec:model-case} and presenting the proof of Theorem \ref{thm:main} in Section \ref{sec:general-case}, we discuss some examples and the continuous case in Section \ref{sec:ex}.


\section{Notations and tools}\label{sec:prelim}

We denote by $\mb{T}$ the unit circle in $\mb{C}$.
We further denote by $\mc{N}$ the set of all bounded sequences $\{a_n\}\subset \mb{C}$ with the property
$$
\lim_{N\to\infty}\aveN |a_n|=0.
$$
By the Koopman-von Neumann lemma, see e.g.~Petersen \cite[p. 65]{petersen:1983}, $(a_n)\in\mc{N}$ if and only if it is bounded and converges to $0$ along a sequence of density $1$.

Let $E$ be a Banach space and let $T\in \mc{L}(E)$ be \emph{weakly almost periodic},
i.e., such that for every $f\in E$ the set $\{T^n f, n\in\mb{N}\}$ is relatively weakly compact in $E$. We will use the following version of the  Jacobs-deLeeuw-Glicksberg decomposition,
see \cite[Theorem II.4.8]{eisner-book} or \cite[Section 16.3]{EFHN}:
$$E=E_r \oplus E_s,$$
where
\begin{eqnarray*}
E_r&:=&\overline\lin\{f\in E:\ Tf=\lambda f\mbox{ for some }\lambda\in\mb{T}\},\\
E_s&:=&\{f\in E:\ (\varphi( T^nf))\in\mc{N} \mbox{ for every } \varphi\in E'\}.
\end{eqnarray*}
Here, $E_r$ is called the \emph{reversible} subspace and $E_s$ the \emph{stable} subspace. Note that Jacobs, deLeeuw, Glicksberg and some other authors use(d) the terminology ``flight vectors'' for elements of $E_s$. Our preference of the name ``(almost weakly) stable vectors'' is justified by the fact that the orbit of such a vector converges to $0$ weakly along a subsequence of density $1$, see, e.g., \cite[Section 16.4]{EFHN}.

Note that every power bounded operator on a reflexive Banach space has relatively weakly compact orbits and hence the above decomposition is valid for e.g.~every contraction on $L^p(X,\mu)$ for $p\in (1,\infty)$.
Moreover, if $T$ is a \emph{Dunford-Schwartz operator} on $L^1(X,\mu)$, i.e., a contraction in $L^1$ which is also a contraction in $L^\infty$, 
then $T$ has relatively weakly compact orbits as well, see Lin, Olsen, Tempelman \cite[Prop.~2.6]{LOT} and Kornfeld, Lin \cite[pp.~226--227]{KL}. 
Note that every Dunford-Schwartz operator is also a contraction on  $L^p(X,\mu)$ for every $p\in(1,\infty)$, see, e.g., \cite[Theorem 8.23]{EFHN}. Thus, the Jacobs-deLeeuw-Glicksberg decomposition is valid for Dunford-Schwartz operators on $L^p(X,\mu)$ for every $p\in[1,\infty)$.

Let  $T$ be a Dunford-Schwartz operator on $(X,\mu)$ (we will write so since $T$ is a contraction on  every $L^p(X,\mu)$, $p\geq 1$). The \emph{(linear) modulus} $|T|$ of $T$ is the unique positive operator on $L^1(X,\mu)$ having  the same $L^1$- and $L^\infty$-norm as $T$ such that $|T^nf|\leq |T|^n |f|$ holds a.e. for every $f\in L^1(X,\mu)$ and every $n\in\mb{N}$. It is again a Dunford-Schwartz operator. For details, see Dunford, Schwartz \cite[p.~672]{DS-book} and Krengel \cite[pp.~159--160]{K-book}. 
Note that for $T$ Dunford-Schwartz, the operators $\lambda T$ for $\lambda\in\mb{T}$ are again Dunford-Schwartz and  have  the same modulus.

For example, every Koopman operator (i.e., the operator induced by a $\mu$-preserving transformation on $X$) is a positive Dunford-Schwartz operator, hence coincides with its modulus, and thus ergodic Koopman operators satisfy the condition $\Fix |T|=\la\mathbf{1}\ra$ appearing in Theorem \ref{thm:main}.  See e.g.~\cite{EFHN} and \cite{petersen:1983} for more information on Koopman operators and an introduction to ergodic theory.

An important property of Dunford-Schwartz operators which we will need is the validity of the pointwise ergodic theorem, i.e., for every $f\in L^1(X,\mu)$ the ergodic averages 
\begin{equation}\label{eq:pet}
\aveN T^nf
\end{equation}
converge a.e.~as $N\to\infty$, see Dunford, Schwartz \cite[p.~675]{DS-book}. 
\begin{Rem}\label{rem:Birk}
Let $T$ be a mean ergodic contraction on $L^1(X,\mu)$
with $\Fix T=\la\mathbf{1}\ra$, and let $f\in L^1(X,\mu)$. Then the $L^1$-limit of (\ref{eq:pet}) equals $c\cdot \mathbf{1}$,
where $c$ is a constant satisfying $|c|\leq \|f\|_1$. Indeed, 
\begin{eqnarray*}
c&=&\|c \mathbf{1}\|_1
=\lim_{N\to\infty}\left\|\aveN T^nf\right\|_1
\leq \lim_{N\to\infty}\aveN \|T^nf\|_1\leq \|f\|_1.
\end{eqnarray*}
In particular, if $T$ is a Dunford-Schwartz operator  with $\Fix T=\la\mathbf{1}\ra$, then the pointwise limit of (\ref{eq:pet}) equals $c\cdot \mathbf{1}$ with $|c|\leq \|f\|_1$.
\end{Rem}

\smallskip

We finally denote by $\ms{P}\subset \ell^\infty$ the set of Bohr almost periodic sequences, i.e., uniform limits of finite linear combinations of sequences of the form $(\lambda^n)$, $\lambda\in \mb{T}$.
The set $\ms{P}$ has the following properties: It is closed in $\ell^\infty$, closed under multiplication, and
is a subclass of  (Weyl) almost periodic sequences $AP(\mb{N})$, i.e., sequences whose orbit under the left shift is relatively compact in $l^\infty$. In fact,  $AP(\mb{N})=\ms{P}\oplus c_0$ holds,  see Bellow, Losert \cite[p. 316]{BL}, corresponding to the Jacobs-deLeeuw-Glicksberg decomposition of $AP(\mb{N})$ induced by the left shift, see, e.g.,~\cite[Theorem I.1.20]{eisner-book}. 

Every element $(a_n)_{n=1}^\infty$ of $AP(\mb{N})$, and hence of  $\ms{P}$, is a good weight for the pointwise ergodic theorem for Dunford-Schwartz operators, i.e., for every Dunford-Schwartz operator $T$ on a probability space and every $f\in L^1(X,\mu)$, the weighted ergodic averages
$$
\aveN a_n T^nf
$$
converge almost everywhere as $N\to\infty$, see {\c{C}}{\"o}mez,  Lin, Olsen \cite[Theorem 2.5]{CLO}.
(Note that also every element of $\ms{N}$ is such a good weight, which is  clear for bounded  functions and follows from the Banach principle for $L^1$-functions. We will however not use it in this paper.)

For more  information and the first part of the following example see, e.g.,~Lin, Olsen, Tempelman \cite{LOT} and Eisner \cite{E}.

\begin{Exam}\label{ex:alm-per}
\begin{enumerate}
\item
If $T$ has relatively weakly compact orbits on a Banach space $E$, $f\in E_r$ and $\varphi\in E'$, then $(\varphi(T^n f))\in \ms{P}$.
\item
Let $(q_k)_{k\in\mb{N}}\in\ell^1$ and $(\gamma_k)\subset \mb{T}$.  Define $(a_n)_{n\in\mb{N}}\subset\ell^\infty$ by 
$$
a_n=
\sum_{k=1}^\infty \gamma_k^n\cdot q_k\quad \forall n\in\mb{N}.
$$
Then $(a_n)\in \ms{P}$.
\end{enumerate}
\end{Exam}

%
%
%
%

\section{A model case}\label{sec:model-case}

Before presenting the proof of the general case, we first explain its ideas on a simpler model where $a=1$, $p=2$ and the decompositions in (A1) are orthogonal.

\begin{Thm}\label{Thm:Main1}
Let 
$(X,\mu)$ be a Borel probability space, $T_0$ be a Dunford-Schwartz operator on $(X,\mu)$,
$H:=L^2(X,\mu)$ and let $H=H_r\oplus H_s$ be the corresponding
Jacobs-deLeeuw-Glicksberg decomposition induced by $T_0$. Let further $A_0\in\mc{L}(H)$ be a bounded operator. For a function $f\in H$,
write $\ms{A}_{f}:=\left\{A_0T_0^nf\left|\right.n\in\mb{N}\right\}$. Suppose that the following holds true:
\begin{center}
For any function $f\in H$ and $\varepsilon>0$, there exists a finite dimensional \\ subspace $\mc{U}=\mc{U}(f,\varepsilon)\subset H$ such that $P_{\mc{U}^\perp}\ms{A}_f\subset B_\epsilon(0,L^\infty(X,\mu))$.\\
\end{center}
Then for any further 
$T_1$ on $(X,\mu)$ with $\Fix |T_1|=\la\mathbf{1}\ra$ 
 we have the following:
\begin{enumerate}
\item for each $f\in H_s$, $\frac{1 }{N}\sum_{n=1}^N |T_1^nA_0T_0^n f|\rightarrow 0$ pointwise a.e.;
\item for each $f\in H_r$, $\frac{1}{N}\sum_{n=1}^N T_1^nA_0T_0^n f$ converges pointwise a.e..
\end{enumerate}
\end{Thm}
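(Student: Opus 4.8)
The plan is to reduce both statements to weighted pointwise ergodic theorems for the Dunford--Schwartz operator $T_1$, exploiting the fact that elements of $\ms{P}$ and of $\ms{N}$ are good weights (the latter via the Koopman--von Neumann lemma and a dominated-convergence argument, or just by using $\ms{P}$-approximation). The starting observation is that for fixed $f$ the orbit point $A_0T_0^nf$ is a function on $X$, so $T_1^nA_0T_0^nf$ evaluated at a fixed $x$ is a sum of products $(\text{coefficient})\cdot(T_1^n g)(x)$ once we decompose $A_0T_0^nf$ into a finite-rank part and an $L^\infty$-small remainder. Concretely, fix $\varepsilon>0$ and take the finite-dimensional $\mc{U}=\mc{U}(f,\varepsilon)$ from the twisted compactness hypothesis, with orthonormal basis $u_1,\dots,u_d$; write
\[
A_0T_0^nf \;=\; \sum_{i=1}^d c_i(n)\, u_i \;+\; r_n,\qquad c_i(n)=\la A_0T_0^nf,u_i\ra,\ \ \|r_n\|_\infty<\varepsilon .
\]
Applying $T_1^n$ and using $|T_1^n r_n|\le |T_1|^n|r_n|\le \varepsilon\, |T_1|^n\mathbf{1}$ pointwise, the remainder contributes at most $\varepsilon\cdot\frac1N\sum_{n=1}^N |T_1|^n\mathbf 1$, whose Cesàro averages converge a.e.\ (pointwise ergodic theorem for the Dunford--Schwartz operator $|T_1|$) to a fixed finite function; since $\Fix|T_1|=\la\mathbf 1\ra$ and $|T_1|$ is mean ergodic, by Remark \ref{rem:Birk} this limit is a constant $\le 1$, so the remainder term is $O(\varepsilon)$ uniformly. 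Thus it suffices to understand $\frac1N\sum_{n=1}^N c_i(n)\,T_1^n u_i$ for each fixed $i$, i.e.\ to show the scalar sequences $(c_i(n))_n$ are good weights of the right type.

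For part (2), suppose $f\in H_r$. The key point is that then the coefficient sequences $n\mapsto c_i(n)=\la A_0T_0^nf,u_i\ra = \la T_0^nf, A_0^* u_i\ra$ are Bohr almost periodic: indeed $T_0$ has relatively weakly compact orbits, $f\in H_r=H_{0,r}$, and $A_0^*u_i\in H'=H$, so by Example \ref{ex:alm-per}(1) we get $(c_i(n))\in\ms{P}$. Since $\ms{P}\subset AP(\mb N)$ consists of good weights for the pointwise ergodic theorem for Dunford--Schwartz operators (\c C\"omez--Lin--Olsen), each average $\frac1N\sum_{n=1}^N c_i(n) T_1^n u_i$ converges a.e. Hence $\frac1N\sum_{n=1}^N T_1^nA_0T_0^nf$ is, up to an error bounded a.e.\ by $C\varepsilon$, within $o(1)$ of an a.e.-convergent sequence; letting $\varepsilon\to0$ along a countable sequence and taking a Cauchy-in-measure / diagonal argument (or noting the lim sup of the oscillation is $\le 2C\varepsilon$ a.e.\ for every $\varepsilon$) yields a.e.\ convergence.

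For part (1), suppose $f\in H_s$. Now the natural statement is about $\frac1N\sum_{n=1}^N |T_1^nA_0T_0^nf|$. Using $|T_1^nA_0T_0^nf|\le \sum_i |c_i(n)|\,|T_1^nu_i| + \varepsilon|T_1|^n\mathbf 1$, the remainder is again $O(\varepsilon)$ a.e.\ as above, and I am reduced to showing $\frac1N\sum_{n=1}^N |c_i(n)|\,|T_1^n u_i|\to 0$ a.e. Here $(c_i(n))=(\la T_0^nf,A_0^*u_i\ra)$ lies in $\mc N$ because $f\in H_s=H_{0,s}$ (the defining property of the stable space). Since $|c_i(n)|$ is bounded, dominated by a constant, and $\frac1N\sum_{n=1}^N|c_i(n)|\to0$, and since $\frac1N\sum_{n=1}^N |T_1^n u_i|$ has a finite a.e.\ limit (dominate by $\frac1N\sum |T_1|^n|u_i|$, which converges a.e.), a standard argument — split the $n$-sum into the density-one set where $|c_i(n)|$ is small and the density-zero complement where $|T_1^nu_i|$ contributes a vanishing Cesàro average by the maximal inequality, or directly: $\frac1N\sum|c_i(n)|\,|T_1^nu_i|\le \big(\max_n\tfrac1N\sum_{m\le n}|T_1|^m|u_i|\big)\cdot\sup|c_i(\cdot)|$ is too crude, so instead use that $|c_i(n)|\to0$ on a density-one set together with the pointwise ergodic averages being bounded in $n$ a.e. — gives the a.e.\ limit $0$, so the $\varepsilon$-remainder lets $\varepsilon\to0$.

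The main obstacle I expect is the last step in part (1): passing from ``$(|c_i(n)|)$ has vanishing Cesàro mean and the ergodic averages $\frac1N\sum|T_1^nu_i|$ are a.e.\ bounded'' to ``the product-averaged sum vanishes a.e.''. The clean way is to invoke that $\mc N$-sequences are themselves good weights for the pointwise ergodic theorem for Dunford--Schwartz operators (remarked in the excerpt, via the Banach principle), applied to $|T_1|$ and $|u_i|$: then $\frac1N\sum_{n=1}^N |c_i(n)|\,|T_1|^n|u_i|$ converges a.e., and its $L^1$-limit is $0$ because $\frac1N\sum\|\,|c_i(n)|\,|T_1|^n|u_i|\,\|_1\le \big(\frac1N\sum|c_i(n)|\big)\|u_i\|_1\to0$; an a.e.-convergent sequence with $L^1$-limit $0$ has a subsequence tending to $0$ a.e., but monotonic-in-$N$ control (or convergence of the full sequence) forces the full limit to be $0$ a.e. Care is needed to make this rigorous without circularity; alternatively one stays entirely within the $\ms{P}$-framework by first approximating $(|c_i(n)|)$ — but $|c_i(n)|$ need not be almost periodic even when $c_i(n)$ is in $\mc N$, so the $\mc N$-good-weight route seems unavoidable here, which is presumably why the excerpt flags it.
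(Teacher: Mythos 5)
Your proposal is correct in outline, and part (2) is essentially the paper's argument: decompose $A_0T_0^nf$ via the twisted-compactness subspace, observe that the coefficient sequences lie in $\ms{P}$ (the paper gets this by expanding $f$ in an orthonormal basis of eigenvectors and using Example \ref{ex:alm-per}(2), you invoke Example \ref{ex:alm-per}(1) directly -- either is fine), use that $\ms{P}$-sequences are good weights for the pointwise ergodic theorem for Dunford--Schwartz operators, and absorb the remainder $r_n$ through $\|T_1^nr_n\|_\infty\le\|r_n\|_\infty<\varepsilon$, letting $\varepsilon\to 0$ along a countable sequence.

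Part (1) is where you diverge. The paper does \emph{not} use that $\mc{N}$-sequences are good weights (the parenthetical remark in Section \ref{sec:prelim} explicitly states this fact only to say it will not be used); instead it splits each basis vector $g_j=\widetilde{g}_j+(g_j-\widetilde{g}_j)$ with $\widetilde{g}_j\in L^\infty$ and $\|g_j-\widetilde{g}_j\|_1$ small: the $\widetilde{g}_j$-term dies because $\frac1N\sum_n|\lambda_{j,n}|\to 0$ while $\|T_1^n\widetilde{g}_j\|_\infty\le\|\widetilde{g}_j\|_\infty$, and the $(g_j-\widetilde{g}_j)$-term is handled by the boundedness of the coefficients together with the unweighted Birkhoff theorem for $|T_1|$, where the hypothesis $\Fix|T_1|=\la\mathbf{1}\ra$ and Remark \ref{rem:Birk} identify the a.e.\ limit as a constant of size at most $\|g_j-\widetilde{g}_j\|_1$. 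Your route -- treat $(|c_i(n)|)\in\mc{N}$ as a good weight for $|T_1|$ applied to $|u_i|$, then identify the a.e.\ limit as $0$ because the $L^1$-norms of the weighted averages tend to $0$ -- is also valid and not circular: the good-weight claim for bounded $\mc{N}$-weights follows from the case of bounded functions plus the Hopf--Dunford--Schwartz maximal inequality (Banach principle), and an a.e.-convergent sequence whose $L^1$-limit is $0$ has a.e.\ limit $0$ by passing to a subsequence. What the two approaches buy is different: yours uses a heavier external tool but, notably, makes the hypothesis $\Fix|T_1|=\la\mathbf{1}\ra$ superfluous in this model case; the paper's splitting is more elementary and, more importantly, is exactly the mechanism that iterates to give the multi-operator Theorem \ref{thm:main}, where intermediate terms are only small in $L^1$ and the fixed-space condition on each $|T_j|$ is what converts that smallness into pointwise smallness of the limit. (Also, your guess that the paper relies on the $\mc{N}$-good-weight remark is the opposite of what happens; and the exploratory density-one/density-zero detour in your part (1) can simply be deleted.)
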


\begin{proof}
Let $f\in H$ and $\epsilon>0$ be given. By assumption we have a finite-dimensional subspace $\mc{U}=\mc{U}(f,\varepsilon)\subset H$ such that
$P_{\mc{U}^\perp}\ms{A}_f
\subset B_\varepsilon(0,L^\infty(X,\mu))$. Let $g_1,\ldots,g_k$ be an orthonormal basis in $\mc{U}$. Then we may for each $n\in\mb{N}$ write
\begin{equation}\label{eq:decomp}
A_0T_0^nf=\lambda_{1,n}g_1+\ldots+\lambda_{k,n}g_k+r_n
\end{equation}
for appropriate $\lambda_{j,n}\in\mb{C}$ and $r_n\in \mc{U}^\perp$ with $\|r_n\|_\infty<\varepsilon$. Note that
\[
\lambda_{j,n}= \langle A_0T_0^nf,g_j\rangle=\langle T_0^nf,A_0^*g_j\rangle,
\]
and so $\left|\lambda_{j,n}\right|\leq \|f\|_2\cdot\|A_0^*\|=:c$.

For part (1), assume that $f\in H_s$. Then
\begin{equation}\label{eq:coef}
\lim_{N\to\infty} \frac{1}{N}\sum_{n=1}^N \left| \lambda_{j,n}\right|=\lim_{N\to\infty} \frac{1}{N}\sum_{n=1}^N \left|\langle T_0^nf,A_0^*g_j\rangle\right|=0
\end{equation}
by the definition of $H_s$.
For $\delta:=\varepsilon/ck$ and for each $1\leq j\leq k$ choose a function $\widetilde{g}_j\in L^\infty(X,\mu)$ such that $\|g_j-\widetilde{g}_j\|_1<\delta$.
By Birkhoff's theorem applied to the functions  $g_j-\widetilde{g}_j$ and the operator $|T_1|$,  see Section \ref{sec:prelim} and in particular Remark \ref{rem:Birk}, there exists a set $S_\varepsilon\subset X$ with $\mu(S_\varepsilon)=1$ such that for every $x\in S_\varepsilon$ and every $j\in \{1,\ldots,k\}$ the following conditions hold:
\begin{itemize}
\item $\lim_{N\to\infty}\frac{1}{N}\sum_{n=1}^{N} (|T_1|^n \left|g_j-\widetilde{g}_j\right|)(x)
\leq
\left\|g_j-\widetilde{g}_j\right\|_1$,
\item $|T_1^nr_n(x)|\leq \|r_n\|_\infty$ and $|T_1^n\widetilde{g}_j(x)|\leq \|\widetilde{g}_j\|_\infty$ for every $n\in\mb{N}$.
\end{itemize}
In particular, we have the following inequalities for each $1\leq j\leq k$ and $x\in S_\varepsilon$
\begin{equation}\label{eqn:Birkhoff}
\overline{\lim_{N\to\infty}}\frac{1}{N}\sum_{n=1}^N \left|\lambda_{j,n} T_1^n(g_j-\widetilde{g}_j)\right|(x)\leq \overline{\lim_{N\to\infty}}\frac{1}{N}\sum_{n=1}^{N} c\left(|T_1|^n \left|g_j-\widetilde{g}_j\right|\right)(x)\leq c\delta.
\end{equation}

Consequently, using that $T_1$ is a Koopman operator and hence preserves the $\|\cdot\|_\infty$-norm, we have for each $x\in S_\varepsilon$ using \eqref{eq:coef}
\begin{eqnarray*}
&&\overline{\lim_{N\to\infty}} \frac{1}{N}\sum_{n=1}^N \left|\left(T_1^nA_0T_0^n f\right)(x) \right|=\overline{\lim_{N\to\infty}} \frac{1}{N}\sum_{n=1}^N \left|\left(T_1^nr_n + \sum_{j=1}^{k} \lambda_{j,n}T_1^ng_j\right)(x) \right|\\
&\leq&
\overline{\lim_{N\to\infty}} \frac{1}{N}\sum_{n=1}^N \left|(T_1^nr_n)\right|(x)
+\sum_{j=1}^{k}\overline{\lim_{N\to\infty}} \frac{1}{N}\sum_{n=1}^N  \left|\lambda_{j,n}T_1^ng_j \right|(x)
\\
&\leq&
\overline{\lim_{N\to\infty}}\frac{1}{N}\sum_{n=1}^N  \left\|T_1^nr_n \right\|_\infty
+\sum_{j=1}^{k}\overline{\lim_{N\to\infty}} \frac{1}{N}\sum_{n=1}^N  \left|\lambda_{j,n}T_1^ng_j \right|(x)\\
&\leq&
\varepsilon
+\sum_{j=1}^{k}\overline{\lim_{N\to\infty}} \frac{1}{N}\sum_{n=1}^N  \left|\lambda_{j,n}T_1^n(g_j-\widetilde{g}_j) \right|(x)
+\sum_{j=1}^{k}\overline{\lim_{N\to\infty}} \frac{1}{N}\sum_{n=1}^N  \left|\lambda_{j,n}T_1^n\widetilde{g}_j \right|(x)\\
&\leq&
\varepsilon
+kc\delta
+\sum_{j=1}^{k}\overline{\lim_{N\to\infty}} \frac{1}{N}\sum_{n=1}^N  \left|\lambda_{j,n}T_1^n\widetilde{g}_j \right|(x)
\\
&\leq&
2\varepsilon
+\sum_{j=1}^{k}\overline{\lim_{N\to\infty}} \frac{1}{N}\sum_{n=1}^N  |\lambda_{j,n}|\left\|T_1^n\widetilde{g}_j \right\|_\infty
\leq
2\varepsilon
+\sum_{j=1}^{k}\left\|\widetilde{g}_j \right\|_\infty\overline{\lim_{N\to\infty}} \frac{1}{N}\sum_{n=1}^N  |\lambda_{j,n}|
=2\varepsilon.
\end{eqnarray*}

\noindent Thus for each $x\in\bigcap_{m\in\mb{N}}S_{1/m}=:S$ we have that
\[
\left(\frac{1}{N}\sum_{n=1}^N \left|T_1^nA_0T_0^n f\right|\right)(x)\rightarrow 0.
\]
Since $\mu(S)=1$, we are done.

For part (2), note that eigenfunctions in $H_r$ pertaining to different unimodular eigenvalues are always orthogonal. Take $f\in H_r$ and let $\left\{h_j\right\}_{j=1}^\infty$ be an orthonormal basis in $H_r$ of eigenvectors pertaining to unimodular eigenvalues $\left\{\alpha_j\right\}_{j=1}^\infty$.
(Note that the space $H$ and hence $H_r$ is separable, and we write here an infinite sequence for notational convenience whereas the finite dimensional case can be treated analogously.) Then we can write $f=\sum_{m=1}^\infty d_m h_m$ for some $\ell^2$-sequence $(d_m)_m$ and obtain by the definition of $\lambda_{j,n}$'s in equality (\ref{eq:decomp})
\begin{eqnarray*}
\lambda_{j,n}=\langle T_0^nf,A_0^*g_j\rangle=\big\langle\sum_{m=1}^\infty \alpha_m^nd_m h_m,A_0^*g_j\big\rangle=\sum_{m=1}^\infty \alpha_m^n \left(d_m\langle h_m,A_0^*g_j\rangle\right).
\end{eqnarray*}
By the Cauchy-Schwarz and Bessel inequalities, $(d_m \langle h_m,A_0^*g_j\rangle )_{m=1}^\infty \in l^1$ with the $l^1$-norm bounded by $\|f\|_2\|A_0^*g_j\|_2$.
So for each $1\leq j\leq k$, we have $(\lambda_{j,n})_n\in \ms{P}$, so this sequence is a good weight for the pointwise ergodic theorem for Dunford-Schwartz operators, see 
Example  \ref{ex:alm-per}(2).
In other words, there exists a set $S_\varepsilon\subset X$ with $\mu(S_\varepsilon)=1$ such that for each $1\leq j\leq k$ and all $x\in S_\varepsilon$, the Cesàro means
\begin{eqnarray*}
\frac{1}{N}\sum_{n=1}^N \lambda_{j,n} \left(T_1^n g_j\right)(x)
\end{eqnarray*}
converge. But $\frac{1}{N}\sum_{n=1}^N\|T_1^nr_n\|_\infty\leq\varepsilon$, and so for each $x\in S_\varepsilon$ we have by \eqref{eq:decomp} that
\begin{align*}
&\left|
\overline{\lim}_{N\to\infty}\left(\frac{1}{N}\sum_{n=1}^N T_1^nA_0T_0^n f\right)(x)-\underline{\lim}_{N\to\infty} \left(\frac{1}{N}\sum_{n=1}^N T_1^nA_0T_0^n f\right)(x)
\right|\\
\leq&
\sum_{j=1}^k\left|
\overline{\lim}_{N\to\infty}\left(\frac{1}{N}\sum_{n=1}^N T_1^n  \lambda_{j,n} g_j \right)(x)
-\underline{\lim}_{N\to\infty} \left(\frac{1}{N}\sum_{n=1}^N T_1^n  \lambda_{j,n} g_j\right)(x)
\right|
\\
&+
\left|
\overline{\lim}_{N\to\infty}\left(\frac{1}{N}\sum_{n=1}^N T_1^n r_n \right)(x)
-\underline{\lim}_{N\to\infty} \left(\frac{1}{N}\sum_{n=1}^N T_1^n r_n\right)(x)
\right|
\\
\leq&
\sum_{j=1}^k\left|
\overline{\lim}_{N\to\infty} \frac{1}{N}\sum_{n=1}^N \lambda_{j,n} \left(T_1^n  g_j \right)(x)
-\underline{\lim}_{N\to\infty} \frac{1}{N}\sum_{n=1}^N   \lambda_{j,n}\left(T_1^n g_j\right)(x)
\right|
\\
&+\left|
\overline{\lim}_{N\to\infty}\frac{1}{N}\sum_{n=1}^N \|T_1^n r_n\|_\infty\right|
+\left|\underline{\lim}_{N\to\infty} \frac{1}{N}\sum_{n=1}^N \|T_1^n r_n\|_\infty
\right|
\\
\leq&\, 0+\varepsilon+\varepsilon= 2\varepsilon.
\end{align*}
\noindent Thus for each $x\in\bigcap_{m\in\mb{N}}S_{1/m}=:S$  the limit
\[
\lim_{N\to\infty}\left(\frac{1}{N}\sum_{n=1}^N T_1^nA_0T_0^n f\right)(x),
\]
exists. Since $\mu(S)=1$, the proof is complete.
\end{proof}

%
%

\section{Proof of Theorem \ref{thm:main}}\label{sec:general-case}

Theorem \ref{Thm:Main1}, up to the orthogonality assumption, provides the proof for the simplest case $a=1$ (and $p=2$).

We shall proceed by iterated splitting. 
To avoid cumbersome notations, however, we shall only provide all details for the case $a=2$, and sketch how the ideas carry over to the general case.
We start with  facts concerning the general case; the assumption  $a=2$ will be introduced later on.

Take $f\in E$ and $\epsilon>0$. Then by assumption (A1) we have a finite-dimensional subspace $\mc{U}=\mc{U}(f,\varepsilon/C^{a-1})\subset E$ and a decomposition $E=\mc{U}\oplus \mc{R}$ such that
\[
P_\mc{R} \ms{A}_{0,f}\subset B_{\varepsilon/C^{a-1}}(0,L^\infty(X,\mu)).
\]
Let $g_1,\ldots,g_k$ be a maximal linearly independent set in $\mc{U}$. Then we may for each $n\in\mb{N}$ write
\[
A_0T_0^nf=\lambda_{1,n}g_1+\ldots+\lambda_{k,n}g_k+r_n
\]
for appropriate $\lambda_{j,n}\in\mb{C}$ and $r_n\in \mc{R}$ with $\|r_n\|_\infty<\varepsilon/C^{a-1}$. By the Hahn-Banach theorem we may consider linear forms $\varphi_1,\ldots \varphi_k\in E'$ such that
$$
\varphi_j(g_i)=\delta_{i,j}\quad \mbox{and}\quad \varphi_j|_{\mc{R}}=0\quad  \mbox{ for every } i,j\in\{1,\ldots,k\}.
$$
We then have
\begin{equation}\label{eq:lambda}
\lambda_{j,n}= \varphi_j( A_0T_0^nf)= (A_0^*\varphi_j)(T_0^nf),
\end{equation}
therefore
\begin{equation}\label{eq:lambda2}
\left|\lambda_{j,n}\right|\leq \|f\|_p \cdot\|A_0^*\|_q \max_{j\in\{1,\ldots,k\}}\| \varphi_j\|_q=:c
\end{equation}
for the dual index $q$. Note that $c$ depends on $\varepsilon$. 

Now we have that
\begin{eqnarray*}
&&T_a^nA_{a-1}T^n_{a-1}\ldots A_1T_1^nA_0T_0^n f\\
&=&T_a^nA_{a-1}T^n_{a-1}\ldots A_1T_1^n r_n
+\sum_{j=1}^k T_a^nA_{a-1}T^n_{a-1}\ldots A_1T_1^n \lambda_{j,n}g_j,
\end{eqnarray*}
and we shall investigate the Cesàro convergence of each term separately.

The first term satisfies, by (A2), the inequality
\[
\frac{1}{N}\sum_{n=1}^N |T_a^nA_{a-1}T^n_{a-1}\ldots A_1T_1^n r_n|(x)\leq C^{a-1}\|r_n\|_\infty<\varepsilon
\]
for almost every $x\in X$.

For part (1), assume that $f\in E_{0,s}$. Then (\ref{eq:lambda}) and (\ref{eq:lambda2}) imply  $(\lambda_{j,n})_{n\in\mb{N}}\in\ms{N}$ for each $1\leq j\leq k$.
Fix $1\leq j\leq k$ and consider the term
$$
\frac{1}{N}\sum_{n=1}^N |T_a^nA_{a-1}T^n_{a-1}\ldots A_1T_1^n \lambda_{j,n}g_j|.
$$ As in the proof of Theorem \ref{Thm:Main1}, we may choose a function $\widetilde{g}_j\in L^\infty$ such that $\|g_j-\widetilde{g}_j\|_1\leq\|g_j-\widetilde{g}_j\|_p<\varepsilon/ck$.
Then
\begin{eqnarray*}
&&\frac{1}{N}\sum_{n=1}^N |T_a^nA_{a-1}T^n_{a-1}\ldots A_1T_1^n \lambda_{j,n} g_j|\\
&\leq&
\frac{1}{N}\sum_{n=1}^N |T_a^nA_{a-1}T^n_{a-1}\ldots A_1T_1^n \lambda_{j,n}(g_j-\widetilde{g}_j)|
+
\frac{1}{N}\sum_{n=1}^N |T_a^nA_{a-1}T^n_{a-1}\ldots A_1T_1^n \lambda_{j,n}\widetilde{g}_j|.
\end{eqnarray*}

Since $(\lambda_{j,n})_{n\in\mb{N}}\in\ms{N}$, the second term satisfies by (A2)
\[
\frac{1}{N}\sum_{n=1}^N |T_a^nA_{a-1}T^n_{a-1}\ldots A_1T_1^n \lambda_{j,n}\widetilde{g}_j|(x)\leq C^{a-1}\|\widetilde{g}_j\|_\infty\cdot\frac{1}{N}\sum_{n=1}^N |\lambda_{j,n}|\to 0
\]
for almost every $x\in X$.

It now remains to treat the first term. Again using our assumption (A1), there exists a finite dimensional subspace $\mc{U}_j=\mc{U}(g_j-\widetilde{g}_j,\varepsilon/kC^{a-2})\subset E$ and a decomposition $E=\mc{U}_j\oplus\mc{R}_j$ such that $P_{\mc{R}_j}\ms{A}_{1,g_j-\widetilde{g}_j}\subset B_{\varepsilon/kC^{a-2}}(0,L^\infty(X,\mu))$. Let $g_{1,j},g_{2,j},\ldots,g_{k_j,j}$ be a maximal linearly independent set in $\mc{U}_j$ and choose $\varphi_{1,j},\ldots, \varphi_{k_j,j}\in E'$ to have the property
$$
\varphi_{i,j}(g_{l,j})=\delta_{i,l}\quad \mbox{and}\quad \varphi_{i,j}|_{\mc{R}_j}=0\quad  \mbox{ for every } i,l\in\{1,\ldots,k_j\}
$$
which is possible by the Hahn-Banach theorem.
Then, for each $n\in\mb{N}$, we write
\[
A_1T_1^n (g_j-\widetilde{g}_j)=\lambda_{1,j,n}g_{1,j}+\ldots+\lambda_{k_j,j,n}g_{k_j,j}+r_{j,n}
\]
for appropriate $\lambda_{i,j,n}\in\mb{C}$ ($1\leq i\leq k_j$) and $r_{j,n}\in \mc{R}_j$ with $\|r_{j,n}\|_\infty<\varepsilon/kC^{a-2}$ and obtain
\[
\lambda_{i,j,n}= \varphi_{i,j}( A_1T_1^n(g_j-\widetilde{g}_j))=(A_1^*\varphi_{i,j})(T_1^n(g_j-\widetilde{g}_j)).
\]
It follows that
$$
\left|\lambda_{i,j,n}\right|\leq \|g_j-\widetilde{g}_j\|_p\cdot\|A_1^*\|_q \cdot\max_{i\in\{1,\ldots,k_j\}}\|\varphi_{i,j}\|_q=:c_j
$$
for the dual index $q$.
Now for each $1\leq i\leq k_j$ choose a function $\widetilde{g}_{i,j}\in L^\infty$ such that $\|g_{i,j}-\widetilde{g}_{i,j}\|_1\leq\|g_{i,j}-\widetilde{g}_{i,j}\|_p<\varepsilon/(cc_jk_jk)$.

Thus we write
\begin{eqnarray*}
&&T_a^nA_{a-1}T^n_{a-1}\ldots A_1T_1^n \lambda_{j,n}(g_j-\widetilde{g}_j)\\
&=&\sum_{i=1}^{k_j} T_a^n\ldots A_2T_2^n \lambda_{j,n}\lambda_{i,j,n}(g_{i,j}-\widetilde{g}_{i,j})
+\sum_{i=1}^{k_j} T_a^n\ldots A_2T_2^n \lambda_{j,n}\lambda_{i,j,n}\widetilde{g}_{i,j}\\
&+&T_a^n\ldots A_2T_2^n \lambda_{j,n} r_{j,n}.
\end{eqnarray*}
When taking the Cesàro averages over $n$ of the absolute values, the contribution of last term tends to 0 for almost every $x\in X$, since $(\lambda_{j,n})_{n\in\mb{N}}\in\ms{N}$. The contribution of the second sum also tends to zero almost everywhere, due to $(\lambda_{j,n}\lambda_{i,j,n})_{n\in\mb{N}}\in\ms{N}$ (as $\ms{N}$ is closed under multiplication by bounded sequences) and by $\widetilde{g}_{i,j}\in L^\infty(X,\mu)$ and (A2).

Now, when $a=2$, using the fact that $T_2$ is a 
Dunford-Schwartz operator, the contribution of the first sum is bounded by
\[
\frac{1}{N}\sum_{n=1}^N \sum_{i=1}^{k_j} |T_2|^n |\lambda_{j,n}\lambda_{i,j,n}(g_{i,j}-\widetilde{g}_{i,j})|.
\]
In this case we can use the boundedness of the $\lambda_*$ sequences and the pointwise ergodic theorem for  Dunford-Schwartz operators (cf. Remark \ref{rem:Birk} and equation (\ref{eqn:Birkhoff}) from the proof of Theorem \ref{Thm:Main1}) to see that there is a set $S_{j,\varepsilon}$ with $\mu(S_{j,\varepsilon})=1$ such that for all $x\in S_{j,\varepsilon}$ this contribution has a limes superior not exceeding $c\cdot c_j\cdot (\varepsilon/cc_jk_jk)=\varepsilon/k_jk$. All other contributions discussed above have limit zero.

Summing over all $1\leq i\leq k_j$ and then $1\leq j\leq k$, we have for each $x\in\cap_{j=1}^k S_{j,\varepsilon}=:S_\varepsilon$ that

\begin{eqnarray*}
&&\overline{\lim_{N\to\infty}} \frac{1}{N}\sum_{n=1}^N \left|\left(T_2^nA_1T_1^nA_0T_0^n f\right)(x) \right|\\
&=&
\overline{\lim_{N\to\infty}} \frac{1}{N}\sum_{n=1}^N
 \left|
\left(
T_2^nA_1T_1^nr_n
+ \sum_{j=1}^{k} T_2^n \lambda_{j,n}r_{j,n}
+ \sum_{j=1}^k\sum_{i=1}^{k_j} T_2^n \lambda_{j,n}\lambda_{i,j,n}\widetilde{g}_{i,j}
\right.
\right.\\
&&+
\left.
\left.
\sum_{j=1}^k\sum_{i=1}^{k_j} T_2^n \lambda_{j,n}\lambda_{i,j,n}(g_{i,j}-\widetilde{g}_{i,j})
\right)(x)
\right|
\\
&\leq&
\overline{\lim_{N\to\infty}} \frac{1}{N}\sum_{n=1}^N
 \left|
\left(
T_2^nA_1T_1^nr_n
\right)(x)
\right|
+\sum_{j=1}^{k}\overline{\lim_{N\to\infty}} \frac{1}{N}\sum_{n=1}^N
 \left|
\left(
T_2^n \lambda_{j,n}r_{j,n}
\right)(x)
\right|\\
&&+\sum_{j=1}^k\sum_{i=1}^{k_j}\overline{\lim_{N\to\infty}} \frac{1}{N}\sum_{n=1}^N
 \left|
\left(
T_2^n \lambda_{j,n}\lambda_{i,j,n}\widetilde{g}_{i,j}
\right)(x)
\right|\\
&&+\sum_{j=1}^k\sum_{i=1}^{k_j}\overline{\lim_{N\to\infty}} \frac{1}{N}\sum_{n=1}^N
 \left|
\left(
\sum_{j=1}^k\sum_{i=1}^{k_j} T_2^n \lambda_{j,n}\lambda_{i,j,n}(g_{i,j}-\widetilde{g}_{i,j})
\right)(x)
\right|\\
&\leq&\varepsilon+\sum_{j=1}^{k}0+\sum_{j=1}^k\sum_{i=1}^{k_j}0+\sum_{j=1}^k\sum_{i=1}^{k_j} \varepsilon/k_jk=2\varepsilon.
\end{eqnarray*}
\noindent Thus for each $x\in\bigcap_{m\in\mb{N}}S_{1/m}=:S$ we have that
\[
\left(\frac{1}{N}\sum_{n=1}^N \left|T_2^nA_1T_1^nA_0T_0^n f\right|\right)(x)\rightarrow 0,
\]
and, since $\mu(S)=1$, we are done.

 If $a>2$, then we from here iterate the following for each operator pair $A_zT_z^n$ ($2\leq z\leq a-1$).\\
We consider the last untreated sum from the previous step, the one containing the contribution arising from the functions $A_zT_z^n(g_*-\widetilde{g}_*)$. Using assumption $(A1)$, we split each such function further into a linear combination of finitely many functions $g_{\ell,*}\in E$ and a remainder term $r_{*,n}\in L^\infty$. The new coefficient sequences $\lambda_{\ell,*,n}$ will also lie in $\ms{N}$, hence the contribution of remainder terms to the Cesàro means will be zero. Then, as seen for $g_j$, we split each of the $g_{\ell,*}$ into an essentially bounded part $\widetilde{g}_{\ell,*}\in L^\infty$ and a remainder small in $L^1$. In the Cesàro means, using that all coefficient sequences lie in $\ms{N}$ and by assumption $(A2)$, the terms with $\widetilde{g}_{\ell,*}$ all have zero contribution, and so we are left with the functions $g_{\ell,*}-\widetilde{g}_{\ell,*}$, from where we continue the iteration.\\
At the end, we reach $T_a^n$, applied to functions $g_*-\widetilde{g}_*$ (sufficiently small in $L^1$) with coefficients being products of $\lambda$-s. At this point, as detailed for $T_2^n$ when we assumed $a=2$, we use the boundedness of the coefficient sequences, and apply Birkhoff's pointwise ergodic theorem for Dunford-Schwartz operators to $|g_*-\widetilde{g}_*|$ to obtain a contribution to the limsup of the Cesàro means that adds up to $2\varepsilon$ over all -- finitely many -- multiindices $*$.

\vspace{0.15cm}

For part (2), assume $p=2$, write $H:=L^2(X,\mu)$ and note that eigenfunctions in $H_{0,r}$ pertaining to different unimodular eigenvalues are orthogonal. For notational convenience we again assume that $H_{0,r}$ is infinite-dimensional, whereas the finite dimensional case can be treated analogously. Take $f\in H_{0,r}$ and let $\left\{h_j\right\}_{j=1}^\infty$ be an orthonormal basis in $H_{0,r}$ of eigenvectors of $T_0$ pertaining to unimodular eigenvalues $\left\{\alpha_j\right\}_{j=1}^\infty$. Then we can write $f=\sum_{m=1}^\infty d_m h_m$ for some $\ell^2$-sequence $(d_m)_m$ and obtain
\begin{eqnarray*}
\lambda_{j,n}
 =\langle T_0^nf,A_0^*\varphi_j\rangle=\big\langle\sum_{m=1}^\infty \alpha_m^nd_m h_m,A_0^*\varphi_j\big\rangle=\sum_{m=1}^\infty \alpha_m^n \left(d_m\langle h_m,A_0^*\varphi_j\rangle\right).
\end{eqnarray*}
So for each $1\leq j\leq k$ we have that
$(\lambda_{j,n})_n\in\ms{P}$
since $\left(d_m\langle h_m,A_0^*\varphi_j\rangle\right)\in l^1$ by the Cauchy-Schwarz inequality.

For each $1\leq j\leq k$, we may split $g_j$ into the (almost weakly) stable and the reversible part with respect to $T_1$, i.e. $g_j=g_j^s+g_j^r$ with $g_j^s\in H_{1,s}$ and $g_j^r\in H_{1,r}$. Then we have
\begin{eqnarray*}
\sum_{j=1}^k T_a^nA_{a-1}T^n_{a-1}\ldots A_1T_1^n \lambda_{j,n}g_j
=\sum_{j=1}^{k_j} T_a^n\ldots A_1T_1^n \lambda_{j,n}g_{j}^r
+\sum_{j=1}^{k} T_a^n\ldots A_1T_1^n \lambda_{j,n} g_j^s.
\end{eqnarray*}

We first look at the contribution of the second sum to the Cesàro averages. Observe that
\begin{eqnarray*}
&&\left|
\frac{1}{N}\sum_{n=1}^N \sum_{j=1}^{k} T_a^n\ldots A_1T_1^n \lambda_{j,n} g_j^s
\right|(x)
\leq
\frac{1}{N}\sum_{n=1}^N \sum_{j=1}^{k}
\left|
T_a^n\ldots A_1T_1^n \lambda_{j,n} g_j^s
\right|(x)\\
&\leq&
\frac{1}{N}\sum_{n=1}^N \sum_{j=1}^{k}
c \left|
T_a^n\ldots A_1T_1^n  g_j^s
\right|(x)
=
 \sum_{j=1}^{k}
\left(
\frac{1}{N}\sum_{n=1}^N
c \left|
T_a^n\ldots A_1T_1^n  g_j^s
\right|(x)
\right)\to 0
\end{eqnarray*}
for almost all $x\in X$, using part (1) applied to $(a-1)$ pairs $A_iT_i^n$.

We now turn our attention to the first sum, involving the reversible parts $g_j^r$.
For each $1\leq j\leq k$ there exists a finite dimensional subspace
\[
\mc{U}_j=\mc{U}(g_j^r,\varepsilon/C^{a-2})\subset H
\]
and a decomposition $E=\mc{U}_j\oplus\mc{R}_j$
such that
\[
P_{\mc{R}_j}\ms{A}_{1,g_j^r}\subset B_{\varepsilon/C^{a-2}}(0,L^\infty(X,\mu)).
\]
 Let $g_{1,j},g_{2,j},\ldots,g_{k_j,j}$ be an orthonormal basis in $\mc{U}_j$. Then we write for each $n\in\mb{N}$
\[
A_1T_1^n (g_j^r)=\lambda_{1,j,n}g_{1,j}+\ldots+\lambda_{k_j,j,n}g_{k_j,j}+r_{j,n}
\]
for appropriate $\lambda_{i,j,n}\in\mb{C}$ ($1\leq i\leq k_j$) and $r_{j,n}\in \mc{R}_j
$ with $\|r_{j,n}\|_\infty<\varepsilon/C^{a-2}$ and observe
\[
\lambda_{i,j,n}= \langle A_1T_1^n g_j^r,\varphi_{i,j}\rangle=\langle T_1^n g_j^r,A_1^*\varphi_{i,j}\rangle,
\]
where as above each $\varphi_{i,j}$ is orthogonal to $\mc{R}_j$ and $\langle g_{l,j},\varphi_{i,j}\rangle=\delta_{l,i}$.
(Note that if $\mc{R}_j\perp\mc{U}_j$, then we can choose
$\varphi_{i,j}:=g_{i,j}$.)
So
$$
\left|\lambda_{i,j,n}\right|\leq \|g_j^r\|_2\cdot\|A_1^*\|\max\{\|\varphi_{i,j}\|_2,\, i=1,\ldots,k_j\}=:c_j
$$
and $(\lambda_{i,j,n})_{n\in\mb{N}}\in\ms{P}$ by Example \ref{ex:alm-per}.

Thus for each $1\leq j\leq k$ we have for almost every $x\in X$
\begin{eqnarray*}
&&\left|\overline{\lim}_{N\to\infty}
\left(
\frac{1}{N}\sum_{n=1}^N  T_a^n\ldots A_2T_2^n \lambda_{j,n} g_j^r
\right)(x)
\right.
-
\left.
\underline{\lim}_{N\to\infty
}
\left(
\frac{1}{N}\sum_{n=1}^N  T_a^n\ldots A_2T_2^n \lambda_{j,n} g_j^r
\right)(x)
\right|\\
&\leq&
\left|\overline{\lim}_{N\to\infty}
\left(
\frac{1}{N}\sum_{n=1}^N  T_a^n\ldots A_2T_2^n \lambda_{j,n} r_{j,n}
\right)(x)
\right.
-
\left.
\underline{\lim}_{N\to\infty}
\left(
\frac{1}{N}\sum_{n=1}^N  T_a^n\ldots A_2T_2^n \lambda_{j,n} r_{j,n}
\right)(x)
\right|
\\
&&+
\sum_{i=1}^{k_j}
\left|\overline{\lim}_{N\to\infty}
\left(
\frac{1}{N}\sum_{n=1}^N  T_a^n\ldots A_2T_2^n \lambda_{j,n}\lambda_{i,j,n} g_{i,j}
\right)(x)
\right.
\\
&&
-
\left.
\underline{\lim}_{N\to\infty}
\left(
\frac{1}{N}\sum_{n=1}^N  T_a^n\ldots A_2T_2^n \lambda_{j,n}\lambda_{i,j,n} g_{i,j}
\right)(x)
\right|^.
\end{eqnarray*}
The first difference on the right hand side is bounded by $2C^{a-2}\|r_{j,n}\|_\infty\leq 2\varepsilon$.

If now $a=2$, then the sum at the end consists of terms of the form
\[
\left|
\overline{\lim}_{N\to\infty}\left(\frac{1}{N}\sum_{n=1}^N T_2^n \lambda_{j,n}\lambda_{i,j,n} g_{i,j}\right)(x)
-\underline{\lim}_{N\to\infty} \left(\frac{1}{N}\sum_{n=1}^N T_2^n\lambda_{j,n}\lambda_{i,j,n} g_{i,j} \right)(x)
\right|.
\]
Note that $(\lambda_{j,n})_n\in\ms{P}$ and $(\lambda_{i,j,n})_n\in\ms{P}$ implies $(\lambda_{j,n}\lambda_{i,j,n})_n\in\ms{P}$, and since elements in $\ms{P}$ are good weights for the pointwise ergodic theorem for Dunford-Schwartz operators, this absolute value is zero for almost all $x$.

Summing up, we obtain
\begin{eqnarray*}
&&\left|
\overline{\lim}_{N\to\infty}
\left(
\frac{1}{N}\sum_{n=1}^N T_2^nA_1T_1^nA_0T_0^n f
\right)(x)
-
\underline{\lim}_{N\to\infty}
\left(
\frac{1}{N}\sum_{n=1}^N T_2^nA_1T_1^nA_0T_0^n f
\right)(x)
\right|\\
&\leq&
\left|
\overline{\lim}_{N\to\infty}
\left(
\frac{1}{N}\sum_{n=1}^N T_2^nA_1T_1^n r_n
\right)(x)
-
\underline{\lim}_{N\to\infty}
\left(
\frac{1}{N}\sum_{n=1}^N T_2^nA_1T_1^n r_n
\right)(x)
\right|\\
&&+\sum_{j=1}^k
\left|
\overline{\lim}_{N\to\infty}
\left(
\frac{1}{N}\sum_{n=1}^N T_2^nA_1T_1^n  \lambda_{j,n}g_j
\right)(x)
-
\underline{\lim}_{N\to\infty}
\left(
\frac{1}{N}\sum_{n=1}^N T_2^nA_1T_1^n \lambda_{j,n}g_j
\right)(x)
\right|\\
&\leq&
2\varepsilon
+\sum_{j=1}^k
\left|
\overline{\lim}_{N\to\infty}
\left(
\frac{1}{N}\sum_{n=1}^N T_2^nA_1T_1^n \lambda_{j,n}g_j^s
\right)(x)
-
\underline{\lim}_{N\to\infty}
\left(
\frac{1}{N}\sum_{n=1}^N T_2^nA_1T_1^n \lambda_{j,n}g_j^s
\right)(x)
\right|\\
&&+\sum_{j=1}^k
\left|
\overline{\lim}_{N\to\infty}
\left(
\frac{1}{N}\sum_{n=1}^N T_2^nA_1T_1^n \lambda_{j,n}g_j^r
\right)(x)
-
\underline{\lim}_{N\to\infty}
\left(
\frac{1}{N}\sum_{n=1}^N T_2^nA_1T_1^n \lambda_{j,n}g_j^r
\right)(x)
\right|\\
&=&
2\varepsilon
+\sum_{j=1}^k
\left|
\overline{\lim}_{N\to\infty}
\left(
\frac{1}{N}\sum_{n=1}^N T_2^nA_1T_1^n \lambda_{j,n}g_j^r
\right)(x)
-
\underline{\lim}_{N\to\infty}
\left(
\frac{1}{N}\sum_{n=1}^N T_2^nA_1T_1^n \lambda_{j,n}g_j^r
\right)(x)
\right|\\
&\leq& 2\varepsilon+2\varepsilon=4\varepsilon
\end{eqnarray*}
for all $x\in S_\varepsilon$ for some appropriate $S_\varepsilon\subset X$ with $\mu(S_\varepsilon)=1$.
\noindent Thus for each $x\in\bigcap_{m\in\mb{N}}S_{1/m}=:S$ we have that
\[
\left|
\overline{\lim}_{N\to\infty}
\left(
\frac{1}{N}\sum_{n=1}^N T_2^nA_1T_1^nA_0T_0^n f
\right)(x)
-
\underline{\lim}_{N\to\infty}
\left(
\frac{1}{N}\sum_{n=1}^N T_2^nA_1T_1^nA_0T_0^n f
\right)(x)
\right|\rightarrow 0.
\]
Since $\mu(S)=1$, this completes the case $a=2$.

For the case when $a>2$, for each pair $(i,j)$, we split the function $g_{i,j}$ into its stable and reversible part with respect to $T_2$, and apply the above arguments until we reach the last operator $T_a$. In each split, the stable parts $g_*^s$ will contribute with a pointwise almost everywhere zero Cesàro average limit each, and the remainder parts $r_*$ have a total spread between the limes superior and the limes inferior bounded by $2\varepsilon$. The last reversible parts $T_ag^r_*$ converge pointwise almost everywhere since the sequence of weights is a product of elements of $\ms{P}$, and hence an element of $\ms{P}$ itself, being a good sequence of weights.

In total, we obtain that
\begin{eqnarray*}
&&\left|
\overline{\lim}_{N\to\infty}
\left(
\frac{1}{N}\sum_{n=1}^N T_a^nA_{a-1}T^n_{a-1}\ldots A_1T_1^nA_0T_0^n f
\right)(x)
\right.\\
&&-
\left.
\underline{\lim}_{N\to\infty}
\left(
\frac{1}{N}\sum_{n=1}^N T_a^nA_{a-1}T^n_{a-1}\ldots A_1T_1^nA_0T_0^n f
\right)(x)
\right|\\
&\leq&a\cdot 2\varepsilon
\end{eqnarray*}
for all $x$ outside of a nullset, completing the proof. (Recall that the form (\ref{eq:lim}) of the limit  is the same as in the norm case and follows from  \cite[Theorem 3]{EKK}.)

\begin{Rem}
For eigenfunctions  $f\in L^p(X,\mu)$ of $T_0$, the averages
$$
\aveN T_1^n A_0 T_0^n f
$$
converge a.e. for every operator $A_0$ on $E:=L^p(X,\mu)$, $p\in[1,\infty)$. Indeed, if $T_0f=\lambda f$ for some $\lambda\in \mb{T}$, then the above averages take the form
$$
\aveN (\lambda T_1)^n A_0f.
$$
Since $\lambda T_1$ is again a Dunford-Schwartz operator, a.e.~convergence of the above 
averages follows from the pointwise ergodic theorem. However, due to the lack of a Banach principle, it is not clear how to conclude convergence for arbitrary $f\in E_r$, $E_r$ being the reversible part of $E$ corresponding to $T_0$, for $p\neq 2$.
\end{Rem}

%
%
%
%

\section{Examples and a continuous analogue}\label{sec:ex}

\subsection{Examples: powers of the Volterra operator}

Consider on $H:=L^2([0,1])$ the Volterra operator $V$ given by
\[
(Vf)(x):=\int_{0}^x f(t) \mr{dt}.
\]
We first check that $V$ can be written as a sum of three operators which satisfy conditions (A1) and (A2) of Theorem \ref{thm:main} for any  Dunford-Schwartz 
operators.

With the orthonormal base $e_m(x):=e^{2\pi imx}$, we have for $0\neq m\in\mb{Z}$
\[(Ve_m)(x)=\int_0^x e^{2\pi imt} \mr{dt}=\frac{1}{2\pi im}\left(e_m(x)-1\right),
\]
and thus for an $f\in H$ with the base decomposition $f=\sum_{m\in\mb{Z}} c_me_m$ (where $(c_m)_m$ is an $\ell^2$-sequence) we may write
\[
(Vf)(x)=\left(\frac{1}{2\pi i}\sum_{0\neq m\in\mb{Z}}c_m\frac{e_m(x)-1}{m} \right)+c_0x.
\]

Consider now the decomposition of the Volterra operator into the sum $V=V_1+V_2+V_3$ with
\begin{eqnarray*}
V_1f:=c_0\cdot J
,\quad
V_2f:= -\frac{1}{2\pi i}\sum_{0\neq m\in\mb{Z}}\frac{c_m}{m}e_0,\quad
V_3f:=\frac{1}{2\pi i}\sum_{0\neq m\in\mb{Z}}\frac{c_m}{m}e_m,
\end{eqnarray*}
where $f=\sum_{m=-\infty}^\infty c_m e_m$ and $J(x)=x$.

The operators $V_1$ and $V_2$ both have one-dimensional range and are bounded with respect to the $L^\infty$-norm. Indeed, the last assertion for $V_2$ follows from
$$
\|V_2f\|_\infty\leq \frac{1}{2\pi}\sum_{0\neq m\in\mb{Z}}\frac{|c_m|}{|m|} \leq \frac{1}{2\pi}\left(\sum_{0\neq m\in\mb{Z}} |c_m|^2 \sum_{0\neq m\in\mb{Z}} \frac{1}{m^2} \right)^{1/2} \leq \frac{\|f\|_2}{2\sqrt{3}}\leq \frac{\|f\|_\infty}{2\sqrt{3}}.
$$
Thus, assumptions (A1) and (A2) are satisfied for both $V_1$ and $V_2$ as well as any choice of  Dunford-Schwartz  
operators $T_j$.
It remains to show that the same holds for $V_3$, too.

Assumption (A2) is satisfied for $V_3$ and any  Dunford-Schwartz
operator by the same calculation as for $V_2$. To show (A1),
let $\varepsilon>0$ and $f\in H$ with $\|f\|_2\leq 1$ be fixed. We may choose $M\in\mb{N}$ such that $\sum_{|m|\geq M}\frac{1}{m^2}<4\pi^2\varepsilon^2$. Then with the decomposition $V_3f=g_1+g_2$, where
\[
g_1:=\frac{1}{2\pi i}\sum_{0< |m|<M}\frac{c_m}{m}e_m,
\]
we have by the Cauchy-Schwarz and Bessel's inequalities
\begin{eqnarray*}
\|g_2\|_\infty \leq \frac{1}{2\pi}\sum_{|m|\geq M} \frac{|c_m|}{|m|}
&\leq& \frac{1}{2\pi}\left(\sum_{|m|\geq M}|c_m|^2\right)^{1/2} \left(\sum_{|m|\geq M}\frac{1}{m^2}\right)^{1/2}\\
&\leq& \frac{1}{2\pi}\|f\|_2 \left(\sum_{|m|\geq M}\frac{1}{m^2}\right)^{1/2}
<\varepsilon.
\end{eqnarray*}
Thus taking
$\mc{U}:=\mr{span}\left\{e_m:|m|<M\right\}$ and $\mc{R}:=\mr{span}\{e_m:|m|\geq M\}$ we have the desired (orthogonal) decomposition in condition (A1) for the operator $V_3$ and any  Dunford-Schwartz operators.

Analogously, for every $k\in \mb{N}$ the operator $V^k=(V_1+V_2+V_3)^k$ decomposes into a finite sum of one-dimensional operators (each term containing at least one $V_1$ or $V_2$) which are bounded with respect to the $L^\infty$-norm and the operator $V_3^k$ of the form $V_3^kf=(2\pi i)^{-k} \sum_{0\neq m\in\mb{Z}}\frac{c_m}{m^k}e_m$. The properties (A1) and (A2) for $V_3^k$ follow analogously to the above calculations for $V_3$. Hence, (A1) and (A2) are satisfied for $V^k$ and any choice of  Dunford-Schwartz
operators $T_j$.

Putting everything together, we obtain for any choice of  Dunford-Schwartz
operators $T_0,\ldots,T_a$  with $\Fix |T_1|=\ldots=\Fix |T_a|=\la\textbf{1}\ra$ and for every $k_0,\ldots,k_{a-1}\in \mb{N}$ that
\begin{enumerate}
\item for each $f\in H_{0,s}$, $\lim_{N\to\infty}\frac{1}{N}\sum_{n=1}^N |T_a^nV^{k_{a-1}}T^n_{a-1}\ldots V^{k_1}T_1^nV^{k_0}T_0^n f|= 0$ pointwise a.e.;
\item for each $f\in H_{0,r}$, $\frac{1}{N}\sum_{n=1}^N T_a^nV^{k_{a-1}}T^n_{a-1}\ldots V^{k_1}T_1^nV^{k_0}T_0^n f$ converges pointwise a.e..
\end{enumerate}

\subsection{Continuous version}

In this section we consider strongly continuous (shortly: $C_0$-) semigroups $(T_j(t))_{t\in [0,\infty)}$ instead of discrete semigroups $(T_j^n)_{n=0}^\infty$, $j\in\{0,\ldots,a\}$. 

Let $T(\cdot):=(T(t))_{t\in [0,\infty)}$ be a $C_0$-semigroup of Dunford-Schwartz operators on $L^1(X,\mu)$. Then, since the unit ball in $L^\infty(X,\mu)$ is invariant under the semigroup, $T(\cdot)$ is by the standard approximation argument automatically  a $C_0$-semigroup  (of contractions) on $L^p(X,\mu)$ for every $\infty>p\geq 1$  (note that the reverse implication also holds). Moreover, for every $f\in L^1(X,\mu)$ the function $(T(\cdot)f)(x)$ is Lebesgue integrable over finite intervals in $[0,\infty)$ for almost every $x\in X$ by Fubini's theo\-rem, see, e.g., Sato \cite[p.~3]{S}. Analogously, for $C_0$-semigroups $T_0(\cdot),\ldots,T_a(\cdot)$ on $E:=L^p(X,\mu)$, operators $A_0,\ldots,A_{a-1}\in\mc{L}(E)$ and $f\in E$, the function 
$$
(T_a(\cdot)A_{a-1}T_{a-1}(\cdot)\ldots A_1T_1(\cdot)A_0T_0(\cdot) f)(x)
$$
is  Lebesgue integrable over finite intervals in $[0,\infty)$ for almost every $x\in X$.

The pointwise ergodic theorem extends to every strongly measurable semigroup $T(\cdot)$ of Dunford-Schwartz operators, see Dunford, Schwartz \cite[pp. 694, 708]{DS-book}. Moreover, as in Remark \ref{rem:Birk}, $\cap_{t>0}\Fix T(t)=\la \textbf{1}\ra$ implies that 
$$
\lim_{\mc{T}\to\infty}\frac{1}{\mc{T}}\int_{0}^\mc{T} T(t)f\, dt=c \cdot\textbf{1}
$$
with $|c|\leq \|f\|_1.$ 
Furhermore, a natural modification of Lin, Olsen, Tempelman \cite[Proof of Prop. 2.6]{LOT} shows that every $C_0$-semigroup of Dunford-Schwartz operators has relatively weakly compact orbits in $L^1(X,\mu)$. Thus, the continuous version of the Jacobs-deLeeuw-Glicksberg decomposition (see e.g.~\cite[Theorem III.5.7]{eisner-book}) is valid for such semigroups.

We also need a continuous analogue of the concept of the modulus. By e.g.~Kipnis \cite{Ki} or Kubokawa \cite{Ku}, for a  $C_0$-semigroup $T(\cdot)$ of contractions there exists a minimal  $C_0$-semigroup dominating $T(\cdot)$ which is also contractive. We denote this positive semigroup by $|T|(\cdot)$ and refer to Becker, Greiner \cite{BG} for related results. (Note that  $|T|(t)\neq |T(t)|$ in general.) Of course, $|T|(\cdot)=T(\cdot)$ for positive semigroups.  Moreover, the construction in \cite[pp. 372-3]{Ki} implies that if $T(\cdot)$ consists Dunford-Schwartz operators then so does $|T|(\cdot)$.

Analogously to the proof of Theorem \ref{thm:main} we obtain the following continuous version of Theorem \ref{thm:main}. (Cf. Bergelson, Leibman, Moreira \cite{BLM} for an abstract method  of transferring discrete results into continuous ones.)

\begin{Thm}\label{thm:main-cont}
For $a\in\mb{N}$, let
$(T_0(t))_{t\in [0,\infty)}$,$(T_1(t))_{t\in [0,\infty)}$, $\ldots $, $(T_a(t))_{t\in [0,\infty)}$ be  
$C_0$-semigroups of Dunford-Schwartz operators on $L^1(X,\mu)$ of a Borel probability space $(X,\mu)$, with 
$$\cap_{t>0}\Fix|T_1|(t)=\ldots=\cap_{t>0}\Fix|T_a|(t)=\la \textbf{1}\ra.$$
For $p\in[1,\infty)$  and $E:=L^p(X,\mu)$, let $E=E_{0,r}\oplus E_{0,s}$ be the Jacobs-deLeeuw-Glicksberg decomposition corresponding to $T_0(\cdot)$.
Let further $A_j\in\mc{L}(E)$ $(0\leq j< a)$ be bounded operators. For a function $f\in E$ and an index $0\leq j< a$, write $\ms{A}_{j,f}:=\left\{A_jT_j(t)f\left|\right.t\in(0,\infty)\right\}$. Suppose that the following conditions hold:
\begin{itemize}
\item[(A1)]\emph{(Twisted compactness)}
For any function $f\in E$, index $0\leq j<a$ and $\varepsilon>0$, there exists a decomposition $E=\mc{U}\oplus \mc{R}$ with $\dim \mc{U}<\infty$
such that $P_\mc{R}\ms{A}_{j,f}
\subset B_\varepsilon(0,L^\infty(X,\mu))$.
\item[(A2)]\emph{(Joint $L^\infty$-boundedness)}
There exists a constant $C>0$ such that we have
\[\{A_jT_j(t)|\,t\in (0,\infty),1\leq j< a\}\subset B_C(0,\mc{L}(L^\infty(X,\mu)).
\]
\end{itemize}
Then 
\begin{enumerate}
\item for each $f\in E_{0,s}$,
$$
\lim_{\mc{T}\to\infty}\frac{1}{\mc{T}}\int_{0}^\mc{T} |T_a(t)A_{a-1}T_{a-1}(t)\ldots A_1T_1(t)A_0T_0(t) f|\,dt=0 \quad \text{ pointwise a.e.};
$$
\item if $p=2$, then for each $f\in E_{0,r}$,
$$
\frac{1}{\mc{T}}\int_{0}^\mc{T} T_a(t)A_{a-1}T_{a-1}(t)\ldots A_1T_1(t)A_0T_0(t) f\,dt
$$
 converges pointwise a.e..
\end{enumerate}
\end{Thm}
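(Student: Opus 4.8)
The plan is to transcribe the iterated‑splitting argument of Section~\ref{sec:general-case} almost verbatim, replacing every Ces\`aro sum $\frac1N\sum_{n=1}^N(\cdot)$ by the Ces\`aro integral $\frac1{\mc T}\int_0^{\mc T}(\cdot)\,dt$ and every discrete tool used there by its $C_0$‑semigroup analogue recalled above: the continuous pointwise ergodic theorem for Dunford--Schwartz semigroups \cite[pp.~694, 708]{DS-book}, the continuous Jacobs-deLeeuw-Glicksberg decomposition \cite[Theorem~III.5.7]{eisner-book}, and the dominating contraction semigroup $|T_j|(\cdot)$ (which again consists of Dunford--Schwartz operators). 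Measurability is not an issue: as noted just before the statement, for $x$ outside a nullset the orbit map $t\mapsto(T_a(t)A_{a-1}T_{a-1}(t)\cdots A_0T_0(t)f)(x)$, and likewise every sub‑orbit produced by the splitting, is Lebesgue‑integrable on bounded intervals, so all the Ces\`aro integrals below are well defined pointwise a.e. The single new analytic input is that the continuous analogue of $\ms P$ --- uniform limits of functions $t\mapsto\sum_k c_k\e^{i\beta_k t}$, $\beta_k\in\mb R$ --- consists of \emph{good weights} for the continuous pointwise ergodic theorem; this is where I expect the (modest) work to lie, and I treat it in the last paragraph. One could instead hope to transfer Theorem~\ref{thm:main} itself by a discretization device as in \cite{BLM}, but with averages over $[0,\mc T]$ and a pointwise conclusion it seems cleaner to rerun the proof.

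Concretely, fix $f\in E$ and $\varepsilon>0$, apply (A1) to get $E=\mc U\oplus\mc R$ with a basis $g_1,\dots,g_k$ of $\mc U$ and functionals $\varphi_j\in E'$ with $\varphi_j(g_i)=\delta_{ij}$, $\varphi_j|_{\mc R}=0$, and write $A_0T_0(t)f=\sum_{j=1}^k\lambda_{j,t}g_j+r_t$ with $\lambda_{j,t}=(A_0^*\varphi_j)(T_0(t)f)$, $|\lambda_{j,t}|\le c$, $\|r_t\|_\infty<\varepsilon/C^{a-1}$. Splitting $T_a(t)A_{a-1}\cdots A_0T_0(t)f$ accordingly, the $r_t$‑part contributes at most $C^{a-1}\|r_t\|_\infty<\varepsilon$ to the Ces\`aro integral of the modulus for a.e.~$x$, by (A2). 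For part~(1), $f\in E_{0,s}$ places each $t\mapsto\lambda_{j,t}$ in the continuous analogue of $\ms N$ (bounded, with $\frac1{\mc T}\int_0^{\mc T}|\lambda_{j,t}|\,dt\to0$), by definition of the stable subspace. Approximating each $g_j$ in $L^1$ by $\widetilde g_j\in L^\infty$, one removes the $\widetilde g_j$‑parts using (A2) and $\frac1{\mc T}\int_0^{\mc T}|\lambda_{j,t}|\,dt\to0$, and iterates the (A1)‑splitting on $g_j-\widetilde g_j$ exactly as in Section~\ref{sec:general-case} --- each new coefficient function again lying in the continuous analogue of $\ms N$, which is stable under multiplication by bounded functions --- until one reaches $T_a(t)$ applied to an $L^1$‑small function with bounded coefficients. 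There, applying the continuous pointwise ergodic theorem to $|T_a|(\cdot)$ and $|g_*-\widetilde g_*|$ and using $\cap_{t>0}\Fix|T_a|(t)=\la\mathbf1\ra$ together with the continuous form of Remark~\ref{rem:Birk}, the Ces\`aro integral of the modulus has $\limsup$ of order $\varepsilon$ for a.e.~$x$. Intersecting the full‑measure sets over $\varepsilon=1/m$, $m\in\mb N$, gives part~(1).

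For part~(2), take $p=2$, $H:=L^2(X,\mu)$, $f\in H_{0,r}$. Eigenfunctions of a contraction semigroup on a Hilbert space belonging to distinct characters $t\mapsto\e^{i\beta t}$ are mutually orthogonal, so $H_{0,r}$ has an orthonormal basis $\{h_m\}$ of eigenfunctions of the generator of $T_0(\cdot)$ with $T_0(t)h_m=\e^{i\beta_m t}h_m$, $\beta_m\in\mb R$, and $f=\sum_m d_m h_m$ with $(d_m)\in\ell^2$; then $\lambda_{j,t}=\sum_m\e^{i\beta_m t}\,d_m\la h_m,A_0^*\varphi_j\ra$ is, by Cauchy--Schwarz, a uniform limit of trigonometric polynomials, hence lies in the continuous analogue of $\ms P$. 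Splitting each $g_j=g_j^s+g_j^r$ with respect to $T_1(\cdot)$ via the continuous decomposition, the contribution of $g_j^s$ to the Ces\`aro integral tends to $0$ a.e.\ by part~(1) applied to the $a-1$ pairs $A_iT_i(\cdot)$, $1\le i<a$; on $g_j^r$ one iterates the (A1)‑splitting as in Section~\ref{sec:general-case}, the new coefficient functions again landing in the continuous analogue of $\ms P$ by Cauchy--Schwarz, until the last operator $T_a(\cdot)$. There the weight is a finite product of elements of the continuous analogue of $\ms P$, hence again such an element (the class being closed under multiplication), so by the good‑weight property the innermost Ces\`aro integrals converge a.e., while the remainder terms produce a total oscillation between $\limsup$ and $\liminf$ of order $\varepsilon$. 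Intersecting over $\varepsilon=1/m$ yields a.e.\ convergence, the limit coinciding with the norm limit whose explicit form \eqref{eq:lim} (continuous version) is inherited from \cite[Theorem~3]{EKK}.

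It remains to justify the new ingredient --- the expected, if modest, obstacle --- namely that every element of the continuous analogue of $\ms P$ is a good weight for the pointwise ergodic theorem for $C_0$‑semigroups of Dunford--Schwartz operators, the continuous counterpart of {\c{C}}{\"o}mez, Lin and Olsen \cite[Theorem~2.5]{CLO}. For a single character this is immediate: if $T(\cdot)$ is a $C_0$‑semigroup of Dunford--Schwartz operators and $\beta\in\mb R$, then $S(t):=\e^{i\beta t}T(t)$ is again such a semigroup, so $\frac1{\mc T}\int_0^{\mc T}\e^{i\beta t}(T(t)f)\,dt=\frac1{\mc T}\int_0^{\mc T}S(t)f\,dt$ converges a.e.\ by the continuous pointwise ergodic theorem. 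Finite linear combinations follow by linearity, and uniform limits by a Banach--principle argument: the weighted maximal operator $f\mapsto\sup_{\mc T>0}\frac1{\mc T}\bigl|\int_0^{\mc T}w(t)(T(t)f)\,dt\bigr|$ is dominated, up to the factor $\|w\|_\infty$, by the maximal operator of the dominating semigroup $|T|(\cdot)$, which is of weak type $(1,1)$, so the set of good weights is closed in the uniform norm. Granting this, the rest of the proof is the bookkeeping of Sections~\ref{sec:model-case}--\ref{sec:general-case} transcribed to Ces\`aro integrals.
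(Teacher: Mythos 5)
Your proposal is correct and follows essentially the same route as the paper, which proves Theorem \ref{thm:main-cont} by rerunning the iterated-splitting proof of Theorem \ref{thm:main} with the continuous tools assembled beforehand (joint measurability via Sato, the continuous pointwise ergodic theorem and Jacobs-deLeeuw-Glicksberg decomposition for Dunford--Schwartz semigroups, and the dominating semigroup $|T|(\cdot)$). Your explicit verification that continuous Bohr almost periodic weights are good weights (via $\e^{i\beta t}T(t)$ being again a Dunford--Schwartz semigroup plus a maximal-inequality closure argument) supplies a detail the paper leaves implicit as the continuous counterpart of \cite[Theorem 2.5]{CLO}, but it does not change the approach.
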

\begin{Rem}
If for some $j\in\{1,\ldots,a\}$ the semigroup $T_j(\cdot)$ consists of positive operators, then one can replace  the condition $\cap_{t>0}\Fix|T_j|(t)=\la \textbf{1}\ra$  by $\ker(G_j)=\la \textbf{1}\ra$ for the generator $G_j$ of $T_j(\cdot)$, see, e.g., Engel, Nagel \cite[Cor.~IV.3.8]{EN-book}.  Moreover, this condition for the semigroup induced by a measure preserving flow is equivalent to the ergodicity of the flow.
\end{Rem}
Note that the examples of powers of the Volterra operator discussed above are valid in the continuous setting as well.

\vspace{0.2cm}

\noindent\textbf{Acknowledgements.} The authors are deeply grateful to the referee for
 valuable comments and suggestions which have improved the paper considerably. In  particular, the referee conjectured
that the arguments and proof methods  should also work for Dunford-Schwartz operators, not just for Koopman operators as stated originally. 
We also thank
 the DAAD for support of the visit of the second author at the University of Leipzig. The second author has received funding from the European Research Council under the European Union's Seventh Framework Programme (FP7/2007-2013) / ERC grant agreement $\mr{n}^\circ$617747, and from the MTA R\'enyi Institute Lend\"ulet Limits of Structures Research Group.

\end{document}